\definecolor{dark-red}{rgb}{0.6,0,0}
\definecolor{dark-green}{rgb}{0,0.4,0}
\definecolor{medium-blue}{rgb}{0,0,0.5}
\newcommand{\Fil}{\mr{Fil}}
\newcommand{\Id}{\mr{Id}}
\newcommand{\mbb}[1]{\mathbb{#1}}
\newcommand{\mr}[1]{\mathrm{#1}}
\newcommand{\Sym}{\mathrm{Sym}}
\numberwithin{equation}{subsection}
\numberwithin{equation}{subsubsection}
\newcommand{\Rep}{\mathrm{Rep}}
\newcommand{\Sp}{\mathrm{Sp}}
\newcommand{\mult}{\mr{mult}}
\theoremstyle{plain}
\newtheorem*{theorem*}{Theorem}
\newtheorem{theorem}[subsubsection]{Theorem}
\newtheorem{corollary}[subsubsection]{Corollary}
\newtheorem{lemma}[subsubsection]{Lemma}
\theoremstyle{definition}
\newtheorem{example}[subsubsection]{Example}
\newcommand{\Exp}{\mathrm{Exp}}
\title{The negative $\sigma$-moment generating function}
\author{Sean Howe}
\begin{document}

\begin{abstract}
For $X$ a pre-$\lambda$ random variable, we show the $\sigma$-moment generating function of $-X$ can be obtained from the $\sigma$-moment generating function of $X$ by applying the composition of the standard and degree flip involutions on symmetric power series. This isometric involution is natural as it preserves the pre-$\lambda$ ring structure on symmetric power series with pre-$\lambda$ coefficients, thus this formula provides a simple description of the $\sigma$-moment generating function of $-X$ whenever the $\sigma$-moment generating function of $X$ has a simple description using the pre-$\lambda$ structure. As an application we compute, in a natural range, the dimensions of orthogonal and symplectic group invariants in tensor products of exterior powers of their standard representations on $\mathbb{C}^n$. We also compute a generating function for stable traces of Frobenius related to the moment conjecture for prime-order function field Dirichlet characters.
\end{abstract}

\maketitle
\tableofcontents

\section{Introduction}

In \cite{Howe.RandomMatrixStatisticsAndZeroesOfLFunctionsViaProbabilityInLambdaRings}, we introduced a theory of probability in pre-$\lambda$ rings in order to give concise tools for describing the distributions of random variables valued in multisets of complex numbers and other similar structures. For $(R, \mathbb{E}: R \rightarrow C)$ a pre-$\lambda$ probability space as in \cite[Definition 3.1.1]{Howe.RandomMatrixStatisticsAndZeroesOfLFunctionsViaProbabilityInLambdaRings}, the $\Lambda$-distribution of a random variable $X \in R$ is encoded by the $\sigma$-moment generating function of \cite[Definition 3.2.1]{Howe.RandomMatrixStatisticsAndZeroesOfLFunctionsViaProbabilityInLambdaRings}, 
\[ \mathbb{E}[\Exp_{\sigma}(Xh_1)] \in \Lambda_C^\wedge, \]
where $\Exp_{\sigma}$ is the $\sigma$-exponential and $\Lambda_C^\wedge$ is a ring of symmetric power series with coefficients in $C$. In practice, this $\sigma$-moment generating function often provides a simple description of the $\Lambda$-distribution of $X$ (cf. \cite[Theorems A-C, \S3.3]{Howe.RandomMatrixStatisticsAndZeroesOfLFunctionsViaProbabilityInLambdaRings}).  

One lacuna of the theory developed in \cite{Howe.RandomMatrixStatisticsAndZeroesOfLFunctionsViaProbabilityInLambdaRings}, as noted already in \cite[Remark 8.1.1]{Howe.RandomMatrixStatisticsAndZeroesOfLFunctionsViaProbabilityInLambdaRings}, is that it is not obvious how to compute the $\sigma$-moment generating function for the negative $-X$ of a random variable $X\in R$ in a concise way starting from the $\sigma$-moment generating function of $X$. In this work, we provide such a concise formula: for $\tilde{\omega}$ the composition of the standard involution $\omega$ on $\Lambda$ and the degree flip involution (defined carefully in \cref{ss.the-involution}), we show in \cref{theorem.expectation-of-negative} that 
\begin{equation}\label{eq.intro-neg-exp}\mathbb{E}[\Exp_{\sigma}(-X h_1)] = \tilde{\omega}\left( \mathbb{E}[\Exp_{\sigma}(X h_1)] \right). \end{equation}

The involution $\tilde{\omega}$ is more natural in our context than the standard involution $\omega$ as, for $R$ a torsion-free pre-$\lambda$ ring, $\tilde{\omega}$ respects the pre-$\lambda$ ring structure on $\Lambda_R^\wedge$ (\cref{lemma.involution-map-of-pre-lambda-rings})\footnote{When $R=\mathbb{Z}$, this involution is introduced in  \cite[\S7.20]{GetzlerKapranov.ModularOperads}, and a version of \cref{lemma.involution-map-of-pre-lambda-rings} is implicit in the proof of  \cite[Corollary 8.19]{GetzlerKapranov.ModularOperads}. We thank Dan Petersen for pointing out this reference!}. In particular, the right-hand side of \cref{eq.intro-neg-exp} can be computed in a concise way whenever the $\sigma$-moment generating function of $X$ itself has a concise formula using pre-$\lambda$ operations such as the $\sigma$-exponential or motivic powers. 

\begin{example}
Suppose $C$ (the codomain of $\mathbb{E}$) is a pre-$\lambda$ ring and $X \in R$ is a pre-$\lambda$ binomial random variable with parameters $p$ and $N$ in $C$ as in \cite[\S 3.3]{Howe.RandomMatrixStatisticsAndZeroesOfLFunctionsViaProbabilityInLambdaRings}, i.e. 
\[ \mathbb{E}[\Exp_{\sigma}(X)]=(1+p (h_1 + h_2 + h_3+ \ldots))^N, \]
where the power is taken using the associated power structure as in \cite[\S 2.6]{Howe.RandomMatrixStatisticsAndZeroesOfLFunctionsViaProbabilityInLambdaRings}. Then, the results described above imply that 
\begin{align*} \mathbb{E}[\Exp_{\sigma}(-X)]&=\tilde{\omega}((1+p (h_1 + h_2 + h_3+ \ldots))^N)\\&=(\tilde{\omega}(1+p (h_1 + h_2 + h_3+ \ldots)))^N\\&=(1 + p(-e_1+e_2-e_3+\ldots))^N. 
\end{align*}
As noted in \cite[Remark 8.1.1]{Howe.RandomMatrixStatisticsAndZeroesOfLFunctionsViaProbabilityInLambdaRings}, this can be used to reduce the number of computations used to establish the asymptotic distributions of $L$-functions of hypersurface sections in \cite[Section 8]{Howe.RandomMatrixStatisticsAndZeroesOfLFunctionsViaProbabilityInLambdaRings} --- see in particular \cite[Theorem 8.3.1]{Howe.RandomMatrixStatisticsAndZeroesOfLFunctionsViaProbabilityInLambdaRings}, where, lacking \cref{theorem.expectation-of-negative}, we had to compute the moment generating functions of a random variable and its negative by running the same method twice over. 
\end{example}

It is particularly interesting to apply this formula to the random matrix $\Lambda$-distributions for orthogonal and symplectic groups computed in \cite[Theorem A]{Howe.RandomMatrixStatisticsAndZeroesOfLFunctionsViaProbabilityInLambdaRings}. As an application, we obtain in \cref{s.invariants-random-matrices} a computation of the dimensions of orthogonal and symplectic invariants in tensor products of exterior powers of $\mathbb{C}^n$ that is novel in that its only representation-theoretic input is the classical invariant theory for polynomial functions (see \cref{remark.explicit-formulas-invariants} for further discussion).

\begin{example}
When $X$ is a random variable whose values are the multisets of reciprocal roots $\alpha$ of arithmetic $L$-functions  as in \cite[Theorems B and C]{Howe.RandomMatrixStatisticsAndZeroesOfLFunctionsViaProbabilityInLambdaRings}, $\mathbb{E}[\Exp_{\sigma}(-X)]$ is the generating function for the infinite formal shifted moments, 
\[ \mathbb{E}[\Exp_{\sigma}(-X)]=\mathbb{E}\left[\prod_{i=1}^\infty \prod_\alpha (1-\alpha t_i) \right]. \]
This generating function is natural in the context of the function field moment conjecture for quadratic characters established in \cite{BergstromDiaconuPetersenWesterland.HyperellipticCurvesTheScanningMapAndMomentsOfFamiliesOfQuadraticLFunctions}.  \cref{theorem.expectation-of-negative} thus explains more conceptually the translation, described in \cite[Example 7.2.3]{Howe.RandomMatrixStatisticsAndZeroesOfLFunctionsViaProbabilityInLambdaRings}\footnote{In \cite[Example 7.2.3]{Howe.RandomMatrixStatisticsAndZeroesOfLFunctionsViaProbabilityInLambdaRings} we used the involution $\omega$, but this is equivalent to the use of $\tilde{\omega}$ in this special case because all of the symmetric functions appearing in the computation are even.}, between the $\sigma$-moment generating function computed in the quadratic case of \cite[Theorem B]{Howe.RandomMatrixStatisticsAndZeroesOfLFunctionsViaProbabilityInLambdaRings} and the generating function of \cite[Theorem 11.2.3]{BergstromDiaconuPetersenWesterland.HyperellipticCurvesTheScanningMapAndMomentsOfFamiliesOfQuadraticLFunctions}. \cref{theorem.expectation-of-negative} can be used to translate all of the $\sigma$-moment generating functions computed in \cite[Theorems B and C]{Howe.RandomMatrixStatisticsAndZeroesOfLFunctionsViaProbabilityInLambdaRings} into generating functions for  infinite formal shifted moments in those cases.  

In particular, in the case of $\ell$-power characters in \cite[Theorem B]{Howe.RandomMatrixStatisticsAndZeroesOfLFunctionsViaProbabilityInLambdaRings}, one obtains the following generating series for stable traces of Frobenius. For $\ell | (q-1)$, let $U_d/\mathbb{F}_q$ denote the space of $\ell$-power free degree $d$ polynomials and, for $\chi$ a fixed non-trivial character of $\mu_\ell(\mathbb{F}_q)$, let $V_d$ be the rank one Kummer local system on the non-vanishing locus $U_d'\subseteq U_d$ obtained by pushing out the $\ell$th root cover by $\chi$. For $\tau$ a partition, we write $S^\tau$ for the associated Schur functor. 
We write 
\[ [H_\bullet^{\mathrm{stable}}( S^{\tau_1}V \otimes S^{\tau_2}V^*)] : = \lim_{d \rightarrow \infty} \sum_i (-1)^i [H^i_c(U_d', S^{\tau_1} V_d \otimes S^{\tau_2}V_d^*)(d)]\]
where the brackets on the Tate-twisted compactly supported cohomology groups denote the characteristic power series of geometric Frobenius in $W(\mathbb{C})=1+t\mathbb{C}[[t]]$ 
and the convergence is treated coefficient-wise. Then, from \cite[Theorem B]{Howe.RandomMatrixStatisticsAndZeroesOfLFunctionsViaProbabilityInLambdaRings} and \cref{theorem.expectation-of-negative}, we deduce the following generalization of \cite[Example 7.2.3]{Howe.RandomMatrixStatisticsAndZeroesOfLFunctionsViaProbabilityInLambdaRings}:
\begin{multline*} \sum_{\tau_1,\tau_2} \left[H_\bullet^{\mathrm{stable}}( S^{\tau_1}V \otimes S^{\tau_2}V^*)\right](-1)^{|\tau_1|+|\tau_2|}s_{\tau_1'}\overline{s}_{\tau_2'}=\\ \mathrm{Exp}_{\sigma}\left(\mathrm{Log}_{\sigma}\left([q^{-{l+1}}]+ \ldots +[q^{-1}] +\sum_{\substack{k_1,k_2 \geq 0 \\ k_1 \equiv k_2 \mod \ell}}h_{k_1}\overline{h}_{k_2} \right)[q]-1\right). \end{multline*}
Here $\tau'$ denotes the conjugate partition to $\tau$ and $s_{\tau'}$ denotes the associated Schur symmetric function. This series may be useful in stable cohomology computations. 

\end{example}

\subsection{Acknowledgements} During the preparation of this work, Sean Howe was supported by the National Science Foundation through grant DMS-2201112. We were encouraged to revisit the computation of the negative $\sigma$-moment generating function by discussions at an AIM workshop on ``Moments in families of L-functions over function fields" (see, e.g., \cref{example.moments}) --- we thank all of the organizers and participants, and especially Dan Petersen for a particularly helpful conversation.

\section{Main results}

We will use the notation of \cite[\S2]{Howe.RandomMatrixStatisticsAndZeroesOfLFunctionsViaProbabilityInLambdaRings} for symmetric functions, symmetric power series, and pre-$\lambda$ rings. In particular, we recall that, when $R$ is a pre-lambda ring, the symmetric power series ring $\Lambda_R^\wedge$ also has a natural pre-$\lambda$ ring structure. 

\subsection{The involution}\label{ss.the-involution}
For $R$ a ring, we let $\omega$ denote the standard involution on $\Lambda_R$ swapping the $i$th elementary symmetric function $e_i$ with the $i$th complete symmetric function $h_i$. We let $\tilde{\omega}$ denote the composition of $\omega$ with the degree flip involution which acts as $(-1)^i$ on the degree $i$ component. Comparing with the usual formulas for $\omega$ (see, e.g., \cite[\S2.8]{Howe.RandomMatrixStatisticsAndZeroesOfLFunctionsViaProbabilityInLambdaRings}), we have, for $i \geq 1$,
\[ \tilde{\omega}(e_i)=(-1)^i e_i,\; \tilde{\omega}(h_i)=-1^i h_i,\; \textrm{ and } \tilde{\omega}(p_i)=-p_i. \]
These formulas shows $\tilde{\omega}$ is also an involution. It extends naturally to $\Lambda_R^\wedge$. 

\begin{lemma}\label{lemma.involution-map-of-pre-lambda-rings}
    If $R$ is a torsion-free pre-$\lambda$ ring, then $\tilde{\omega}: \Lambda_R^\wedge \rightarrow \Lambda_R^\wedge$ is a map of pre-$\lambda$ rings. In particular, for $f$ in $\Fil^1 \Lambda_R^\wedge$,
    \[ \tilde{\omega} \left( \Exp_{\sigma}(f) \right)= \Exp_{\sigma}(\tilde{\omega}(f)).\]
\end{lemma}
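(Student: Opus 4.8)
The plan is to verify the two defining properties of a morphism of pre-$\lambda$ rings for $\tilde{\omega}$ — that it is a ring homomorphism and that it commutes with the pre-$\lambda$ operations $\sigma^n$ — after which the displayed ``in particular'' will be formal. The ring-homomorphism part is quick: the standard involution $\omega$ is a ring automorphism of $\Lambda_R$, and the degree flip is a ring automorphism of the graded ring $\Lambda_R$ because on homogeneous elements of degrees $i$ and $j$ the two signs multiply correctly, $(-1)^i(-1)^j=(-1)^{i+j}$. Hence the composite $\tilde{\omega}$ is a ring automorphism; it fixes $R$ (which sits in degree $0$), and the displayed formulas for $\tilde{\omega}(e_i),\tilde{\omega}(h_i),\tilde{\omega}(p_i)$ exhibit it as an involution. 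Since both factors are homogeneous, $\tilde{\omega}$ is continuous for the degree filtration, extends to $\Lambda_R^\wedge$, and preserves each graded piece, in particular $\Fil^1\Lambda_R^\wedge$.

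Next I would reduce the pre-$\lambda$ compatibility to Adams operations. Because $R$ is torsion-free, so is $\Lambda_R^\wedge$, and a ring endomorphism of a torsion-free pre-$\lambda$ ring is a morphism of pre-$\lambda$ rings if and only if it commutes with all Adams operations $\psi^n$: Newton's identities express the $\sigma^n$ as $\mathbb{Q}$-polynomials in $\psi^1,\dots,\psi^n$ and conversely, and an identity valid after $\otimes\,\mathbb{Q}$ holds integrally by torsion-freeness. Thus it suffices to prove $\tilde{\omega}\,\psi^n=\psi^n\,\tilde{\omega}$ for every $n$.

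The heart of the argument is then a short computation on the power-sum basis. Working in $\Lambda_{R\otimes\mathbb{Q}}^\wedge$, write a general element as an ($R$-coefficient, possibly infinite) combination of the power-sum monomials $p_\lambda=\prod_i p_{\lambda_i}$. Using the description of the pre-$\lambda$ structure of $\Lambda_R^\wedge$, the Adams operation acts additively by $\psi^n(c\,p_\lambda)=\psi^n_R(c)\,p_{n\lambda}$, where $n\lambda$ scales every part of $\lambda$ by $n$, while $\tilde{\omega}(c\,p_\lambda)=(-1)^{\ell(\lambda)}\,c\,p_\lambda$ follows from $\tilde{\omega}(p_i)=-p_i$, multiplicativity, and $\tilde{\omega}|_R=\mathrm{id}$. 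Since scaling parts does not change their number, $\ell(n\lambda)=\ell(\lambda)$, so $\psi^n$ preserves the sign $(-1)^{\ell(\lambda)}$; comparing gives $\psi^n\tilde{\omega}(c\,p_\lambda)=(-1)^{\ell(\lambda)}\psi^n_R(c)\,p_{n\lambda}=\tilde{\omega}\psi^n(c\,p_\lambda)$, and both sides extend additively and continuously to all of $\Lambda_R^\wedge$.

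I expect the main obstacle to be bookkeeping rather than conceptual: pinning down the exact description of the pre-$\lambda$ structure on $\Lambda_R^\wedge$ — in particular the coefficient-wise action of $\psi^n$ through $\psi^n_R$ and the formula $p_\lambda\mapsto p_{n\lambda}$ — together with the torsion-free equivalence between pre-$\lambda$ morphisms and Adams-commuting ring maps. The genuine subtlety is that $R$ is only pre-$\lambda$, so its Adams operations need not be ring homomorphisms; I therefore avoid any reduction to multiplicative generators of the coefficients and rely only on the additivity of $\psi^n$ and on the fact that $\tilde{\omega}$ fixes $R$ pointwise, after which the verification collapses to the observation $\ell(n\lambda)=\ell(\lambda)$. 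Finally, the ``in particular'' is formal: $\Exp_{\sigma}$ is built from the pre-$\lambda$ operations and is natural with respect to morphisms of pre-$\lambda$ rings, and $\tilde{\omega}$ preserves $\Fil^1\Lambda_R^\wedge$, so $\tilde{\omega}(\Exp_{\sigma}(f))=\Exp_{\sigma}(\tilde{\omega}(f))$ for $f\in\Fil^1$.
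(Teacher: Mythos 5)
Your proposal is correct and follows essentially the same route as the paper: reduce via torsion-freeness to checking commutation with Adams operations over $R\otimes\mathbb{Q}$, compute on power-sum monomials where $\psi^n$ sends $p_\tau$ to $p_{n\tau}$ and $\tilde{\omega}$ acts by $(-1)^{\ell(\tau)}$, and conclude from $\ell(n\tau)=\ell(\tau)$. The extra detail you supply (ring-homomorphism property, continuity, handling of the constant term) is consistent with what the paper leaves implicit.
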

\begin{proof}
Replacing $R$ with $R \otimes \mathbb{Q}$, we may assume $R$ is a $\mathbb{Q}$-algebra.  Then, to verify $\tilde{\omega}$ respects the pre-$\lambda$ structure, it suffices to check the Adams operations $p_i \circ$ commute with $\tilde{\omega}$.  Since $R$ is a $\mathbb{Q}$-algebra, any element of $\Lambda_R^\wedge$ is of the form $\sum_\tau r_\tau p_{\tau}$ for $r_\tau \in R$ and $p_\tau=p_{\tau_1} p_{\tau_2} \ldots$ the usual power sum monomials. We then have
\[ p_i \circ \sum_\tau r_\tau p_{\tau} = \sum_\tau (p_i \circ r_\tau) p_{i\tau}. \]
where $i\tau$ denotes the multiplication of each term in the partition $\tau$ by $i$ (e.g. $i (2^3 5^1)=(2i)^3(5i)^1$). The constant term associated to the empty partition $\tau=\emptyset$ on each side of the equation is $r_\emptyset$; thus we may subtract $r_\emptyset$ off at the beginning of the computation to assume below that $r_\emptyset=0$. Then we have 
\begin{align*} \tilde{\omega} ( p_i \circ \sum_\tau r_\tau p_{\tau} ) = \sum_\tau (p_i \circ r_\tau) (-1)^{||i\tau||} p_{i\tau} \textrm{, and } \\  p_i \circ \tilde{\omega}(\sum_\tau r_\tau p_{\tau} )= \sum_{\tau} (p_i \circ r_\tau) (-1)^{||\tau||} p_{i\tau} \end{align*}  
for $||\tau||$ counting the number of terms in a partition (e.g. $|2^3 5^1|=3+1=4$). We conclude as $||i\tau||=||\tau||$. 
\end{proof}

Both of the involutions $\tilde{\omega}$ and $\omega$ extend to $\Lambda_R^\wedge$. We write $\langle, \rangle$ for the extension of the Hall inner product on $\Lambda_R$ to a pairing between $\Lambda_R^\wedge$ and $\Lambda_R$ (cf. \cite[2.7]{Howe.RandomMatrixStatisticsAndZeroesOfLFunctionsViaProbabilityInLambdaRings}). 

\begin{lemma}\label{lemma.involution-preserves-inner-product}
    The involution $\tilde{\omega}$ is an isometry, i.e., for $f \in \Lambda_R^\wedge$ and $g \in \Lambda_R$, 
    \[ \langle \tilde{\omega}(f), \tilde{\omega}(g) \rangle = \langle f , g \rangle. \]
\end{lemma}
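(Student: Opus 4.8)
The plan is to factor $\tilde{\omega}$ as a composition of two maps that are each separately isometries for the Hall pairing, namely the standard involution $\omega$ and the degree flip involution $\theta$ (acting by $(-1)^i$ on the degree $i$ component), and then to use that a composition of isometries is an isometry. Since $\omega$ preserves the grading while $\theta$ scales each graded piece, the two commute and $\tilde{\omega}=\theta\circ\omega$; this is consistent with the formulas $\tilde{\omega}(e_i)=(-1)^i e_i$ and $\tilde{\omega}(p_i)=-p_i$ recorded above, since $\theta(\omega(p_i))=\theta((-1)^{i-1}p_i)=(-1)^{2i-1}p_i=-p_i$.

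The structural input I would isolate first is that the Hall pairing is graded. Writing $f=\sum_{d\ge 0}f_d$ and $g=\sum_{d\ge 0}g_d$ for the homogeneous decompositions --- with only finitely many $g_d$ nonzero since $g\in\Lambda_R$, so that the pairing is a finite, well-defined sum $\langle f,g\rangle=\sum_d\langle f_d,g_d\rangle$ --- one has $\langle f_d,g_e\rangle=0$ for $d\ne e$. This orthogonality of distinct degrees holds integrally (no hypothesis on $R$), e.g.\ from the duality of $\{h_\lambda\}$ and $\{m_\mu\}$ or from $\langle p_\lambda,p_\mu\rangle=\delta_{\lambda\mu}z_\lambda$ over $\mathbb{Q}$. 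Granting it, $\theta$ is an isometry by one line: $\langle\theta(f),\theta(g)\rangle=\sum_d(-1)^{2d}\langle f_d,g_d\rangle=\langle f,g\rangle$.

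It then remains to recall that $\omega$ is itself an isometry --- the classical statement that $\omega$ is orthogonal for the Hall pairing. To keep the whole argument uniform over an arbitrary base ring (no torsion-freeness is assumed in this lemma), I would observe that $\tilde{\omega}$ and $\langle\cdot,\cdot\rangle$ are base-changed from their $\mathbb{Z}$-counterparts and that the desired equality is $R$-bilinear; working in each fixed degree, where every space is a finite free $R$-module base-changed from $\mathbb{Z}$, it reduces to the case $R=\mathbb{Z}$, and then to $\mathbb{Q}$ after the embedding $\mathbb{Z}\hookrightarrow\mathbb{Q}$. Over $\mathbb{Q}$ the verification of the $\omega$-isometry (and indeed of the whole lemma at once) is immediate from the orthogonal power sum basis: one has $\tilde{\omega}(p_\lambda)=(-1)^{\ell(\lambda)}p_\lambda$, where $\ell(\lambda)$ is the number of parts, so $\tilde{\omega}$ acts by a sign on each $p_\lambda$, and since that sign squares to $1$ we get $\langle\tilde{\omega}(p_\lambda),\tilde{\omega}(p_\mu)\rangle=(-1)^{\ell(\lambda)+\ell(\mu)}\delta_{\lambda\mu}z_\lambda=\langle p_\lambda,p_\mu\rangle$.

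The one step I would handle with care is precisely this passage to an arbitrary $R$ together with the completion $\Lambda_R^\wedge$: the pairing is only defined because $g$ has finite support in the grading, and it is exactly this gradedness that underwrites both the well-posedness of $\langle f,g\rangle$ and the $\theta$-isometry. This is why I favor the integral factorization $\tilde{\omega}=\theta\circ\omega$ over a computation that superficially appears to need $\mathbb{Q}$-coefficients; the reduction above shows the two viewpoints agree and that nothing beyond the hypotheses already in place is required.
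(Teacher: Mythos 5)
Your proof is correct and takes essentially the same route as the paper: both decompose $f$ and $g$ into homogeneous components, use orthogonality of distinct degrees so the signs $(-1)^i$ cancel in pairs, and invoke the classical fact that $\omega$ is an isometry. Your additional care about base change to $\mathbb{Z}$ and then $\mathbb{Q}$ simply makes explicit the inputs the paper takes for granted.
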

\begin{proof}
    We write $f=\sum f_i$ and $g = \sum g_i$ as sums of homogeneous components, where $f_i$ and $g_i$ are of degree $i$. Then,
    \begin{multline} \langle \tilde{\omega}(f), \tilde{\omega}(g) \rangle = \langle \sum_i (-1)^i \omega(f_i), \sum_i (-1)^i \omega(g_i) \rangle =\\ \sum_i \langle \omega(f_i), \omega(g_i) \rangle = \sum_i \langle f_i, g_i \rangle = \langle \sum_i f_i, \sum_i g_i \rangle=\langle f, g \rangle, \end{multline}
    where here we have used both that $\omega$ preserves the Hall inner product and that terms in different degrees are orthogonal. 
\end{proof}

\subsection{The negative $\sigma$-moment generating function} 

The following is the main result. We give two proofs --- the first uses the Hall inner product to extract the desired values of the $\Lambda$-distribution from the $\sigma$-moment generating function as in \cite[Lemma 3.2.2]{Howe.RandomMatrixStatisticsAndZeroesOfLFunctionsViaProbabilityInLambdaRings} and then combines this with \cref{lemma.involution-preserves-inner-product}. This is the ``obvious" strategy from the perspective of \cite{Howe.RandomMatrixStatisticsAndZeroesOfLFunctionsViaProbabilityInLambdaRings}. The second, shorter proof uses \cref{lemma.involution-map-of-pre-lambda-rings}. 

\begin{theorem}\label{theorem.expectation-of-negative}
    Suppose $(\mathbb{E}, R)$ is a pre-$\lambda$ probability space. For $X \in R$, 
    \[ \mathbb{E}[\Exp_\sigma(-X h_1)]=\tilde{\omega}(\mathbb{E}[\Exp_\sigma (X h_1)]). \]
\end{theorem}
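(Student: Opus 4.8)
The plan is to deduce the identity from \cref{lemma.involution-map-of-pre-lambda-rings}. The conceptual point is that $\tilde{\omega}$ implements negation of the random variable at the level of Adams operations: since $\tilde{\omega}(p_i)=-p_i$ and the $i$th power sum of a class evaluates to its $i$th Adams operation $\psi^i$, which is additive, negating $X$ should correspond to applying $\tilde{\omega}$. Concretely, the degree-one element $Xh_1\in\Fil^1\Lambda_R^\wedge$ satisfies $\tilde{\omega}(Xh_1)=-Xh_1$, because $\tilde{\omega}$ fixes the coefficient $X\in R$ and sends $h_1\mapsto(-1)^1 h_1=-h_1$. Assuming momentarily that $R$ is torsion-free, \cref{lemma.involution-map-of-pre-lambda-rings} then gives
\[ \Exp_\sigma(-Xh_1)=\Exp_\sigma(\tilde{\omega}(Xh_1))=\tilde{\omega}(\Exp_\sigma(Xh_1)) \]
in $\Lambda_R^\wedge$. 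Applying $\mathbb{E}:\Lambda_R^\wedge\to\Lambda_C^\wedge$ coefficient-wise and using that $\mathbb{E}$ commutes with $\tilde{\omega}$ --- the former acts only on coefficients in $R$ while the latter rescales the symmetric-function part by integer scalars preserved by the ring map $\mathbb{E}$ --- yields the desired equality $\mathbb{E}[\Exp_\sigma(-Xh_1)]=\tilde{\omega}(\mathbb{E}[\Exp_\sigma(Xh_1)])$.

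As an alternative matching the ``obvious'' strategy of \cite{Howe.RandomMatrixStatisticsAndZeroesOfLFunctionsViaProbabilityInLambdaRings}, I would instead test both sides against the Hall pairing. By nondegeneracy it suffices to compare $\langle\,\cdot\,,g\rangle$ for all $g\in\Lambda_C$. Using \cref{lemma.involution-preserves-inner-product} together with $\tilde{\omega}^2=\Id$ rewrites the right-hand pairing as $\langle\mathbb{E}[\Exp_\sigma(Xh_1)],\tilde{\omega}(g)\rangle$, and the extraction lemma \cite[Lemma 3.2.2]{Howe.RandomMatrixStatisticsAndZeroesOfLFunctionsViaProbabilityInLambdaRings} converts each pairing into an expectation of a pre-$\lambda$ evaluation, reducing the claim to the pointwise identity $g[-X]=\tilde{\omega}(g)[X]$. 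Both sides are ring homomorphisms in $g$, so it suffices to check this on the power sums, where $p_i[-X]=\psi^i(-X)=-\psi^i(X)=-p_i[X]=(\tilde{\omega}p_i)[X]$.

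The main obstacle I anticipate is bookkeeping rather than conceptual. For the first route one must justify dropping the torsion-free hypothesis of \cref{lemma.involution-map-of-pre-lambda-rings}; I would do this by universality rather than by inverting primes, since $\Lambda_R^\wedge$ may have torsion: the free pre-$\lambda$ ring on one generator is torsion-free, and the pre-$\lambda$ map sending that generator to $X$ induces a map of symmetric power-series rings that intertwines $\Exp_\sigma$ and $\tilde{\omega}$ and carries $xh_1$ to $Xh_1$, so the identity specializes. For the second route the delicate point is pinning down the precise form of the extraction lemma and the evaluation $g\mapsto g[X]$. In both cases the sole genuine input is the additivity of Adams operations, $\psi^i(-X)=-\psi^i(X)$, which is precisely the shadow of $\tilde{\omega}(p_i)=-p_i$.
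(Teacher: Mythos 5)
Your proposal is correct and matches the paper, which in fact gives both of your arguments: your first route is the paper's second proof (commuting $\tilde{\omega}$ with $\Exp_\sigma$ via \cref{lemma.involution-map-of-pre-lambda-rings} and then with $\mathbb{E}$), and your second route is the paper's first proof (extraction via the Hall pairing, \cite[Lemma 3.2.2]{Howe.RandomMatrixStatisticsAndZeroesOfLFunctionsViaProbabilityInLambdaRings}, and \cref{lemma.involution-preserves-inner-product}, where the paper checks the pointwise identity on the $h_\tau$ rather than on power sums, which avoids any rational reduction). Your extra care about the torsion-free hypothesis of \cref{lemma.involution-map-of-pre-lambda-rings}, handled by specializing from the free pre-$\lambda$ ring on one generator, addresses a point the paper's second proof passes over silently.
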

\begin{proof}[First proof of \cref{theorem.expectation-of-negative}]
    We first note that 
    \[ \mathbb{E}[\Exp_\sigma(-X h_1)] = \sum_{\tau} \mathbb{E}[ h_\tau \circ (-X)] m_\tau= \sum_{\tau} \mathbb{E}[e_\tau \circ X](-1)^{|\tau|} m_\tau. \]
    
    Now, by \cite[Lemma 3.2.2]{Howe.RandomMatrixStatisticsAndZeroesOfLFunctionsViaProbabilityInLambdaRings},
    \[ \mathbb{E}[e_\tau \circ X] = \langle \mathbb{E}[\Exp_\sigma (X h_1)], e_\tau \rangle. \]
    Applying the identity of \cref{lemma.involution-preserves-inner-product} to the right-hand side of this equation yields
    \[ \mathbb{E}[e_\tau \circ X] = \langle \tilde{\omega}(\mathbb{E}[\Exp_\sigma (X h_1)]), (-1)^{|\tau|} h_\tau \rangle \]
    and thus 
    \[  \mathbb{E}[e_\tau \circ X] (-1)^{|\tau|} = \langle \tilde{\omega}(\mathbb{E}[\Exp_\sigma (X h_1)]),  h_\tau \rangle. \]
    Thus, because $\langle m_\tau, h_\sigma \rangle = \delta_{\tau\sigma}$, 
    \[  \tilde{\omega}(\mathbb{E}[\Exp_\sigma (X h_1)]) = \sum_{\tau} \mathbb{E}[e_\tau \circ X] (-1)^{|\tau|}  m_\tau = \mathbb{E}[\Exp_\sigma(-X h_1)].\]
\end{proof}
\begin{proof}[Second proof of \cref{theorem.expectation-of-negative}]
    We have 
    \begin{align*} \mathbb{E}[\Exp_{\sigma}(-X h_1)] &= \mathbb{E}[\Exp_{\sigma}(\tilde{\omega}(X h_1))]\\ 
    &= \mathbb{E}[\tilde{\omega}(\Exp_{\sigma}(X h_1))]\\
    &= \tilde{\omega}\left(\mathbb{E}[\Exp_{\sigma}(X h_1)] \right)
    \end{align*}
    where for the second equality we apply \cref{lemma.involution-map-of-pre-lambda-rings} to commute $\tilde{\omega}$ with $\Exp_{\sigma}$ and the third equality follows directly from the definitions: on an element of $\Lambda_R^\wedge$, $\mathbb{E}$ is applied coefficient-wise to obtain an element of $\Lambda_C^\wedge$, i.e. it is the continuous extension of $\mathbb{E} \otimes_{\mathbb{Z}} \Id_{\Lambda}$ from $R \otimes_{\mathbb{Z}} \Lambda$ to $\Lambda_R^\wedge$, whereas the involution $\tilde{\omega}$ on $\Lambda_R^\wedge$ (resp. $\Lambda_C^\wedge$)  is the continuous extension of $\Id_{R} \otimes_{\mathbb{Z}} \tilde{\omega}|_{\Lambda}$ (resp. $\Id_{C} \otimes_{\mathbb{Z}} \tilde{\omega}|_{\Lambda}$), and evidently we have $(\mathbb{E} \otimes_{\mathbb{Z}} \Id_{\Lambda}) \circ (\Id_{R} \otimes_{\mathbb{Z}} \tilde{\omega}|_{\Lambda}) = (\Id_{C} \otimes_{\mathbb{Z}} \tilde{\omega}|_{\Lambda}) \circ (\mathbb{E} \otimes_{\mathbb{Z}} \Id_{\Lambda}).$   
\end{proof}

\section{An application to random matrices}\label{s.invariants-random-matrices}

Suppose $G$ is a compact topological group. For $R=K_0(\Rep_{\mathbb{C}} G)$, we view $R$ as a pre-$\lambda$ probability space as in \cite[\S4]{Howe.RandomMatrixStatisticsAndZeroesOfLFunctionsViaProbabilityInLambdaRings} with expectation $\mathbb{E}: R \rightarrow \mathbb{Z}$ given by integration of the trace over unit volume Haar measure; by the usual orthogonality relations, for $V \in \Rep_{\mathbb{C}} G$ with associated effective class $[V] \in K_0(\Rep_{\mathbb{C}} G)$,   
\[ \mathbb{E}[\,[V]\,] = \dim_{\mathbb{{C}}} V^{G}. \]

The ring $K_0(\Rep_{\mathbb{C}} G)$ is a pre-$\lambda$ ring with $\sigma$-operations of effective classes given by symmetric powers and $\lambda$-operations of effective classes given by exterior powers. In particular, for $V$ a complex representation of $G$, 

\[ \Exp_{\sigma}([V]h_1)=\sum_\tau  [\Sym^\tau V] m_\tau \textrm{ and } \Exp_{\sigma}(-[V]h_1)=\sum_\tau (-1)^{|\tau|} [\wedge^\tau V] m_\tau \]
where, for $\tau=\tau_1\tau_2\ldots$, $\Sym^\tau V=\otimes_i \Sym^{\tau_i} V$ \textrm{ and } $\wedge^\tau V=\otimes_i \wedge^{\tau_i} V$. For example $\tau=3\cdot2^2\cdot 1$, $\wedge^\tau V=\wedge^3 V \otimes (\wedge^2 V)^{\otimes 2} \otimes V$.

In \cite{Howe.RandomMatrixStatisticsAndZeroesOfLFunctionsViaProbabilityInLambdaRings}, we computed $\mathbb{E}[\Exp_{\sigma}([V]h_1)]$ for $V$ the standard representation of an orthogonal or symplectic group, up to a natural truncation, using the classical invariant theory of symmetric algebras as in \cite{Weyl.TheClassicalGroupsTheirInvariantsAndRepresentations}. We now apply \cref{theorem.expectation-of-negative} to obtain from this computation also the moment generating function for $-[V]$, and thus to compute the invariants of tensor powers of exterior powers of $V$.

In the following, we consider the multiplicity filtration on $\Lambda^\wedge$
\[ \Lambda^\wedge_{\mr{mult}>i} = \left\{ \sum_{||\tau||>i} a_\tau m_\tau\, |\, a_\tau \in \mathbb{Z} \right\} \]
where we recall that, for a partition $\tau$, $||\tau||$ is the number of distinct entries in $\tau$. We will also consider the degree filtration on $\Lambda^{\wedge}$, which can be written
\[ \Lambda^{\wedge}_{\mr{deg}>i}=\left\{ \sum_{|\tau|>i} a_\tau m_\tau\, |\, a_\tau \in \mathbb{Z} \right\} \]
where we recall $|\tau|$ is the sum of the partition. 
 
\begin{corollary}\label{random-matrices}\hfill
\begin{enumerate}
\item For $n \geq 1$, let $V_n$ be the standard representation of $O(n)$ on $\mathbb{C}^n$. Then, 
\[ \mbb{E}[\Exp_{\sigma}(-[V_n]h_1)]\equiv \Exp_\sigma(e_2)  \mod \tilde{\omega}(\Lambda^\wedge_{\mr{mult}>n}) \textrm{ (thus also mod $\Lambda^\wedge_{\mr{deg}>n}$)} \]
\item For $n \geq 1$ even, let $V_n$ be the standard representation of $\Sp(n)$ on $\mathbb{C}^n$. Then,
\[ \mbb{E}[\Exp_{\sigma}(-[V_n]h_1)]=\Exp_{\sigma}(h_2) \mod \tilde{\omega}(\Lambda^\wedge_{\mult>n}) \textrm{ (thus also mod $\Lambda^\wedge_{\mr{deg}>n}$)}. \]
\end{enumerate}
\end{corollary}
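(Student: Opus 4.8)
The plan is to reduce everything to the companion computation of the positive $\sigma$-moment generating function together with the naturality of $\tilde{\omega}$. Recall from \cite[Theorem A]{Howe.RandomMatrixStatisticsAndZeroesOfLFunctionsViaProbabilityInLambdaRings} that the classical first fundamental theorem of invariant theory for symmetric algebras yields, in the stated range,
\[ \mathbb{E}[\Exp_{\sigma}([V_n]h_1)] \equiv \Exp_{\sigma}(h_2) \pmod{\Lambda^\wedge_{\mr{mult}>n}} \]
in the orthogonal case, where the quadratic invariants $\langle v_i, v_j\rangle$ are symmetric and hence indexed by $h_2=\sum_{i\leq j}x_ix_j$, and
\[ \mathbb{E}[\Exp_{\sigma}([V_n]h_1)] \equiv \Exp_{\sigma}(e_2) \pmod{\Lambda^\wedge_{\mr{mult}>n}} \]
in the symplectic case, where the symplectic pairings $\omega(v_i,v_j)$ are alternating and hence indexed by $e_2=\sum_{i<j}x_ix_j$. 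First I would invoke \cref{theorem.expectation-of-negative} to rewrite the left-hand side of each identity to be proved as $\tilde{\omega}(\mathbb{E}[\Exp_{\sigma}([V_n]h_1)])$, so that the task becomes that of applying $\tilde{\omega}$ to the two congruences just recalled.

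Next I would push $\tilde{\omega}$ through each congruence. Since $\tilde{\omega}$ is an additive bijection, a congruence modulo the subgroup $\Lambda^\wedge_{\mr{mult}>n}$ is carried to a congruence modulo its image $\tilde{\omega}(\Lambda^\wedge_{\mr{mult}>n})$, which is exactly the modulus appearing in the statement. To evaluate the right-hand sides I would apply \cref{lemma.involution-map-of-pre-lambda-rings} to commute $\tilde{\omega}$ past $\Exp_{\sigma}$, reducing to the computation of $\tilde{\omega}$ on the degree-two generators. As $\tilde{\omega}$ exchanges $h_i$ and $e_i$ up to the sign $(-1)^i$, in degree two the sign is trivial, so $\tilde{\omega}(h_2)=e_2$ and $\tilde{\omega}(e_2)=h_2$. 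Hence $\tilde{\omega}(\Exp_{\sigma}(h_2))=\Exp_{\sigma}(e_2)$ and $\tilde{\omega}(\Exp_{\sigma}(e_2))=\Exp_{\sigma}(h_2)$, which produces the two displayed identities of the corollary.

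Finally I would justify the parenthetical refinement that the congruences also hold modulo $\Lambda^\wedge_{\mr{deg}>n}$. Since the number of (distinct) parts never exceeds their sum, we have $||\tau||\leq |\tau|$ for every partition $\tau$, so $\Lambda^\wedge_{\mr{mult}>n}\subseteq \Lambda^\wedge_{\mr{deg}>n}$. The involution $\tilde{\omega}$ acts on the degree-$i$ component as $(-1)^i$ times $\omega$, and $\omega$ preserves degree, so $\tilde{\omega}$ preserves the degree filtration; therefore $\tilde{\omega}(\Lambda^\wedge_{\mr{mult}>n})\subseteq \tilde{\omega}(\Lambda^\wedge_{\mr{deg}>n})=\Lambda^\wedge_{\mr{deg}>n}$, and a congruence modulo the smaller subgroup is a fortiori one modulo the larger.

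The argument is essentially formal once \cite[Theorem A]{Howe.RandomMatrixStatisticsAndZeroesOfLFunctionsViaProbabilityInLambdaRings} is granted, so I do not anticipate a genuine obstacle. The only points demanding care are the bookkeeping of the two filtrations under $\tilde{\omega}$ and the verification that the relevant sign $(-1)^2$ is trivial, so that $h_2$ and $e_2$ are interchanged cleanly with no stray sign that would disturb the stated normalizations.
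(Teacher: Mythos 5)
Your proposal is correct and follows essentially the same route as the paper: invoke the positive-variable computation from \cite[Theorem A/4.2.1]{Howe.RandomMatrixStatisticsAndZeroesOfLFunctionsViaProbabilityInLambdaRings}, apply \cref{theorem.expectation-of-negative} together with \cref{lemma.involution-map-of-pre-lambda-rings} to commute $\tilde{\omega}$ past $\Exp_{\sigma}$ and use $\tilde{\omega}(h_2)=e_2$, then transport the congruence to the modulus $\tilde{\omega}(\Lambda^\wedge_{\mr{mult}>n})$ and deduce the degree-filtration refinement from $\Lambda^\wedge_{\mr{mult}>n}\subseteq\Lambda^\wedge_{\mr{deg}>n}$ and the $\tilde{\omega}$-stability of the degree filtration. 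Your write-up is just slightly more explicit than the paper's about the invariant-theoretic origin of $h_2$ versus $e_2$ and about why $\tilde{\omega}$ carries one modulus to the other.
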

\begin{proof}
The arguments are identical, so we treat just (1). By \cite[Theorem 4.2.1]{Howe.RandomMatrixStatisticsAndZeroesOfLFunctionsViaProbabilityInLambdaRings}, 
\[ \mathbb{E}[\Exp_{\sigma}([V_n]h_1)] \equiv \Exp_\sigma(h_2)  \mod \Lambda^\wedge_{\mr{mult}>n}. \]
Since $\tilde{\omega}(h_2)=e_2$, Applying \cref{theorem.expectation-of-negative} and \cref{lemma.involution-map-of-pre-lambda-rings}, we find
\[ \mathbb{E}[\Exp_{\sigma}([V_n]h_1)] \equiv \Exp_\sigma(e_2)  \mod \tilde{\omega}(\Lambda^\wedge_{\mr{mult}>n}). \]
Since $\Lambda^\wedge_{\mr{mult}>n} \subseteq \Lambda^\wedge_{\mr{deg}>n}$ and the degree filtration is preserved by $\tilde{\omega}$, we conclude the congruence also holds modulo $\Lambda^\wedge_{\mr{deg}>n}$. 
\end{proof}

\begin{example}\label{remark.explicit-formulas-invariants}
As in,
\cite[Theorem A/4.2.1]{Howe.RandomMatrixStatisticsAndZeroesOfLFunctionsViaProbabilityInLambdaRings} we note 
\[ \Exp_{\sigma}(h_2)=\prod_{i\leq j}\frac{1}{1-t_it_j} \textrm{ and } \Exp_{\sigma}(e_2)=\prod_{i < j}\frac{1}{1-t_it_j}.  \] 
Thus \cref{random-matrices} gives, for any $\tau$ with $|\tau|\leq n$, explicit stable combinatorial formulas for 
\[ \dim_{\mathbb{C}} (\wedge^\tau \mathbb{C}^n)^{O(n)} \textrm{ and, for $n$ even,} \dim_{\mathbb{C}}  (\wedge^\tau \mathbb{C}^n)^{\Sp(n)}. \]
By comparing with \cite[Theorem A/4.2.1]{Howe.RandomMatrixStatisticsAndZeroesOfLFunctionsViaProbabilityInLambdaRings}, we also obtain, for $n$ even and $|\tau| \leq n$,
\begin{align*} \dim_{\mathbb{C}} (\wedge^\tau \mathbb{C}^n)^{O(n)} &= \dim_{\mathbb{C}} (\Sym^\tau \mathbb{C}^n)^{\Sp(n)} \textrm{, and} \\\dim_{\mathbb{C}} (\wedge^\tau \mathbb{C}^n)^{\Sp(n)} &= \dim_{\mathbb{C}} (\Sym^\tau \mathbb{C}^n)^{O(n)}. \end{align*}
In \cite{Howe.PerspectivesOnInvariantTheory}, invariants in tensor products of exterior algebras are studied from the perspective of highest weight theory (though these particular formulas do not, to our knowledge, appear in loc. cit. or elsewhere). By comparison, the computation given here is notable in that it uses only the most classical invariant theory for polynomial invariants (i.e. tensor products of symmetric powers) through \cite[Theorem A/4.2.1]{Howe.RandomMatrixStatisticsAndZeroesOfLFunctionsViaProbabilityInLambdaRings} followed by manipulations with symmetric functions to obtain the result (note, in particular, that our argument makes no use of the symmetric functions that are specific to the representation theory of symplectic or orthogonal groups).   
\end{example}

\begin{example}\label{example.moments}
For $M$ an $n\times n$ matrix, we consider the characteristic polynomial $P_M(t)=\det (\Id - t M)$.  Using \cref{random-matrices}, we find that, for $G=\Sp(n)$, 
\[ \int_{G} \left( \left. \left(\frac{d}{ds}\right)^r\right|_{s=0} P_M(\exp(\lambda s)) \right) d\mathrm{Haar}= \sum_{0 \leq j \textrm{ even} \leq n} (j \lambda)^r. \]
Indeed, 
\[ \det (\Id - t M) = \sum_{0 \leq j \leq n} (-1)^j e_j(\alpha_1, \ldots, \alpha_n)t^j \]
for $\alpha_i$ the eigenvalues of $M$. Plugging in $t=\exp(\lambda(s))$, we find
\[ \left( \left. \left(\frac{d}{ds}\right)^r\right|_{s=0} P_M(\exp(\lambda s)) \right) = \sum_{0 \leq j \leq n} (-1)^j (\lambda j)^r e_j(\alpha_1, \ldots, \alpha_n)  \]
Integrating $e_j$ over the Haar measure computes the dimension of the invariants in $\wedge^j \mathbb{C}^n$, which are $1$-dimensional when $j$ is even and $0$-dimensional when $j$ is odd (by applying \cref{random-matrices} and the formula for the generating function in \cref{remark.explicit-formulas-invariants}). These characteristic polynomial integrals arise in random matrix analogs of the study of the statistical behavior of moments of $L$-functions and their derivatives (and are computed more generally by other means in the literature).
\end{example}

\bibliographystyle{plain}
\bibliography{references, preprints}

\end{document}